\def\amsbb{\use@mathgroup \M@U \symAMSb}
\newtheorem{thm}{Theorem}
\newtheorem{cor}{Corollary}
\newtheorem{de}{Definition}
\newtheorem{prop}{Proposition}
\def\E{\amsbb{E}}
\def\C{\amsbb{C}ov}
\def\P{\amsbb{P}}
\def\R{\amsbb{R}}
\def\N{\amsbb{N}}
\def\D{\mathrm{d}}
\def\Var{\amsbb{V}ar}
\def\C{\amsbb{C}ov}
\DeclareMathOperator{\csch}{csch}
\DeclareMathOperator*{\erf}{erf}
\DeclareMathOperator*{\for}{\quad\text{for}\quad}
\newcommand\ind[1]{\amsbb{I}_{#1}}
\definecolor{myblue}{rgb}{.8, .8, 1}
\newlength\mytemplen
\newsavebox\mytempbox
\newcommand\mybluebox{%
    \@ifnextchar[
       {\@mybluebox}%
       {\@mybluebox[0pt]}}
\def\@mybluebox[#1]{%
    \@ifnextchar[
       {\@@mybluebox[#1]}%
       {\@@mybluebox[#1][0pt]}}
\def\@@mybluebox[#1][#2]#3{
    \sbox\mytempbox{#3}%
    \mytemplen\ht\mytempbox
    \advance\mytemplen #1\relax
    \ht\mytempbox\mytemplen
    \mytemplen\dp\mytempbox
    \advance\mytemplen #2\relax
    \dp\mytempbox\mytemplen
    \colorbox{myblue}{\hspace{1em}\usebox{\mytempbox}\hspace{1em}}}
\title{The Jacobi theta distribution}
\author[1]{Caleb Deen Bastian\thanks{\href{mailto:cbastian@princeton.edu}{cbastian@princeton.edu}}}
\author[2,3]{Grzegorz A Rempala\thanks{G.R. acknowledges support from NSF Division of
Math Sciences grant 1853587}}
\author[1,4]{Herschel Rabitz\thanks{H.R. acknowledges support from US Army Research Office grant W911NF-19-1-0382}}
\affil[1]{Program in Applied and Computational Mathematics, Princeton University, Princeton, NJ. USA}
\affil[2]{Division of Biostatistics, The Ohio State University, Columbus, OH. USA}
\affil[3]{Department of Mathematics, The Ohio State University, Columbus, OH. USA}
\affil[4]{Department of Chemistry, Princeton University, Princeton, NJ. USA}
\date{\today}  
\begin{document}

\vspace{-6cm}                    
\maketitle

\begin{abstract}We form the Jacobi theta distribution through discrete integration of exponential random variables over an infinite inverse square law surface. It is continuous, supported on the positive reals, has a single positive parameter, is unimodal, positively skewed, and leptokurtic. Its cumulative distribution and density functions are expressed in terms of the Jacobi theta function.  We describe asymptotic and log-normal approximations, inference, and a few applications of such distributions to modeling.
\end{abstract}

{\bf Keywords}: Jacobi theta function, Jacobi theta distribution, Laplace transform, log-normal distribution, inverse-square law

\section{Introduction} We describe a univariate continuous distribution called the Jacobi theta distribution supported on the positive reals that does not appear in the literature to the best of our knowledge (see e.g., \cite{cud}). The distribution is attained from the action of an infinite random measure $N$ with random weights $\{W_x\}$ and fixed atoms $\N_{\ge1}$, where the weights are $iid$ exponential random variables with common mean $m\in(0,\infty)$, on the test function $f(x)=1/x^2$ for $x\in\N_{\ge1}$, such that $Nf$ is a random variable having the Jacobi theta distribution. It is represented as the infinite sum \[Nf = \sum_{x\ge1}W_x/x^2\] Its law is encoded in its Laplace transform \[\alpha\mapsto\sqrt{\alpha m}\pi\csch(\sqrt{\alpha m}\pi)\] The Jacobi theta distribution is continuous, has a single parameter $m$, is unimodal, positively skewed, and leptokurtic. 

This note is organized as follows. In Section~\ref{sec:back} we give the mathematical backdrop in terms of random measures. In Section~\ref{sec:density} we give the main result of the existence of the distribution and state some of its properties. In Section~\ref{sec:relation} we show that the distribution may be approximated by asymptotic expansion and by the log-normal distribution. In Section~\ref{sec:application} we give three applications to modeling data. In Section~\ref{sec:conc} we end with discussions and conclusions.

\section{Background}\label{sec:back} We give background using notation and conventions from \cite{cinlar,rm}.
Let $(\Omega,\mathscr{H},\P)$ be a probability space and let $(E,\mathscr{E})$ be a measurable space. A \emph{random measure} is a transition kernel from $(\Omega,\mathscr{H})$ into $(E,\mathscr{E})$. Specifically the mapping $N:\Omega\times E\mapsto\R_{\ge0}$ is a random measure if $\omega\mapsto N(\omega,A)$ is a random variable for each $A$ in $\mathscr{E}$ and if $A\mapsto N(\omega,A)$ is a measure on $(E,\mathscr{E})$ for each $\omega$ in $\Omega$. We denote $\mathscr{E}_{\ge0}$ the set of non-negative $\mathscr{E}$-measurable functions. 

The law of $N$ is uniquely determined by the \emph{Laplace functional} $L$ from $\mathscr{E}_{\ge0}$ into $[0,1]$ \begin{equation} L(f)=\E e^{-Nf} = \E\exp_-\int_EN(\D x)f(x)\for f\in\mathscr{E}_{\ge0}\end{equation} The Laplace functional encodes all the information of $N$: its distribution, moments, etc. The distribution of $Nf$, denoted by $\eta$, i.e. $\eta(\D x)=\P(Nf\in \D x)$, is encoded by the Laplace transform, which may be expressed in terms of the Laplace functional \begin{equation}F(\alpha) = \E e^{-\alpha Nf} =  \E e^{-N(\alpha f)}=L(\alpha f)\for \alpha\in\R_{\ge0}\end{equation} The \emph{moments} of $Nf$ (if they exist) can be attained from the Laplace functional \begin{equation}\label{eq:moments} \E(Nf)^n = (-1)^n\lim_{q\downarrow0}\frac{\partial^n}{\partial q^n}L(qf)=\lim_{q\downarrow0}\int_0^\infty \eta(\D x)x^ne^{-qx}\quad\text{for}\quad n\in\N_{\ge1}\end{equation} 

Let $D\subset E$ be a countable subset of $E$ and let $\{W_x: x\in D\}$ be an independency of non-negative random variables distributed $W_x\sim\nu_x$ with mean $m_x$ and variance $\sigma_x^2$. The random measure $N$ on $(E,\mathscr{E})$ formed as \begin{equation}\label{eq:fixed}N(A) = \int_{E}N(\D x)\ind{A}(x) = \sum_{x\in D}W_x\ind{A}(x)\for A\in\mathscr{E}\end{equation} is \emph{additive} with \emph{fixed atoms} of $D$ and \emph{random weights} of $\{W_x\}$. The Laplace functional of $N$ is given by \begin{equation}\label{eq:fixedlaplace}L(f)=\E e^{-Nf} = \prod_{x\in D}\int_{\R_{\ge0}}\nu_x(\D z)e^{-zf(x)} = \prod_{x\in D}F_x(f(x))\for f\in\mathscr{E}_{\ge0}\end{equation} where $F_x$ is the Laplace transform of $\nu_x$ defined as \begin{equation}F_x(\alpha) = \int_{\R_{\ge0}}\nu_x(\D z)e^{-\alpha z}\for \alpha\in\R_{\ge0}\end{equation} The Laplace transform $F$ of $Nf$ is expressed in terms of the Laplace functional $F(\bullet)=L(\bullet f)$. 

$Nf$ is formed as \begin{equation}\label{eq:Nf}Nf = \int_EN(\D x)f(x) = \sum_{x\in D}W_xf(x)\for f\in\mathscr{E}_{\ge0}\end{equation} and has mean, variance, and second moment \begin{align*}\E Nf &= \sum_{x\in D}m_xf(x)\\\Var Nf &= \sum_{x\in D}\sigma_x^2f^2(x)\\\E(Nf)^2&=\Var Nf + (\E Nf)^2\end{align*} and covariance of $f,g\in\mathscr{E}_{\ge0}$ \[\C(Nf,Ng) = \sum_{x\in D}\sigma_x^2f(x)g(x)\] Hence, assuming at least one of the $\{W_x\}$ is non-degenerate, the covariance is zero if and only if the functions are disjoint.

 A product random measure $M=N\times N$ on $(E\times E,\mathscr{E}\otimes\mathscr{E})$ can be defined as \begin{align*}\label{eq:product}Mf &=\int_{E\times E}M(\D x,\D y)f(x,y)\nonumber\\&=\int_{E\times E}N(\D x)N(\D y)f(x,y)\nonumber\\&= \sum_{(x,y)\in D^2} W_xW_yf(x,y)\for f\in(\mathscr{E}\otimes\mathscr{E})_{\ge0}\end{align*} with Laplace functional \[\E e^{-Mf} = \prod_{(x,y)\in D^2}F_{xy}(f(x,y))\for f\in(\mathscr{E}\otimes\mathscr{E})_{\ge0}\] where $F_{xy}=F_x\circledast F_y$ is the Laplace transform of $W_xW_y$ and $\circledast$ is the convolution operator. For product functions $f=g\times g\in(\mathscr{E}\otimes\mathscr{E})_{\ge0}$, we have that $Mf=(Ng)^2$. This readily extends to $n$-products with $M=\bigtimes_x^n N$ for $f=\bigtimes_{x}^n g$, where we have $Mf = (Ng)^n$. Therefore \begin{equation}\label{eq:Nn}\E(Ng)^n = \sum_{(x,\dotsb,y)\in D^n}\E(W_x\dotsb W_y)g(x)\dotsb g(y)\for g\in\mathscr{E}_{\ge0},\quad n\ge1\end{equation}

\section{Distribution}\label{sec:density} 


We define the Jacobi theta function.

\begin{de}[Jacobi theta function]\label{re:jacobi} The Jacobi theta function is defined as \begin{equation}\label{eq:jacobi}\theta_2(z,q)=2q^{1/4}\sum_{k=0}^\infty q^{k(k+1)}\cos((2k+1)z)\end{equation}
\end{de}

Now we give the main result on the existence of the Jacobi theta distribution.

\begin{samepage}
\begin{thm}[Jacobi theta]\label{thm:csch} Consider the random measure $N$ \eqref{eq:fixed}. Let $D=\N_{\ge1}$ with $(E,\mathscr{E})=(\R_{>0},\mathscr{B}_{\R_{>0}})$ and let $\{W_x\}$ be an independency of exponential random variables with common mean $m\in(0,\infty)$. Then for $f\in\mathscr{E}_{\ge0}$ as $f(x)=1/x^2$, $Nf$ has Laplace transform \[F(\alpha) = \sqrt{\alpha m}\pi\csch(\sqrt{\alpha m}\pi)\for\alpha\in\R_{\ge0}\] cumulative distribution function \[\eta(Nf\le x)= \sqrt{\frac{m\pi}{x}}\theta _2\left(0,e^{-\frac{m \pi ^2}{x}}\right)\] and density \[\eta(\D x) = \frac{\sqrt{m\pi}}{2x^{5/2}}\left(2m\pi^2\left(\frac{\theta _2\left(0,e^{-\frac{m \pi ^2}{x}}\right)}{4}+2e^{-\frac{m\pi^2}{x}}\sum_{k\ge1}k(k+1)(e^{-\frac{m\pi^2}{x}})^{k(k+1)-3/4}\right)-x\theta _2\left(0,e^{-\frac{m \pi ^2}{x}}\right)\right)\D x\] \end{thm}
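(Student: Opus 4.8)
The plan is to compute the Laplace transform directly from the product formula \eqref{eq:fixedlaplace}, recognize the resulting infinite product as a classical Euler product for the hyperbolic sine, and then invert to obtain the distribution function and density. First I would note that an exponential random variable with mean $m$ has Laplace transform $F_x(\alpha) = (1+\alpha m)^{-1}$. Applying \eqref{eq:fixedlaplace} with $f(x)=1/x^2$ gives
\[
F(\alpha) = L(\alpha f) = \prod_{x\ge1}\frac{1}{1+\alpha m/x^2} = \prod_{x\ge1}\frac{x^2}{x^2+\alpha m}.
\]
The key step is then to identify this with $\sqrt{\alpha m}\,\pi\,\csch(\sqrt{\alpha m}\,\pi)$ via the Weierstrass/Euler product
\[
\frac{\sinh(\pi z)}{\pi z} = \prod_{x\ge1}\left(1+\frac{z^2}{x^2}\right),
\]
evaluated at $z=\sqrt{\alpha m}$; I should also check convergence of the product (guaranteed since $\sum_x 1/x^2<\infty$) so that \eqref{eq:fixedlaplace} applies and $Nf$ is almost surely finite. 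This part is essentially a lookup of a standard identity and I expect it to be routine.

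The substantive work is the inversion. To obtain the cumulative distribution function I would use the fact that $\eta(Nf\le x)$ is the inverse Laplace transform of $F(\alpha)/\alpha$ in the variable $x$ (since $\int_0^\infty e^{-\alpha x}\,\eta(Nf\le x)\,dx = F(\alpha)/\alpha$ for a distribution on $\R_{>0}$). So I need the inverse Laplace transform of $\alpha^{-1}\sqrt{\alpha m}\,\pi\,\csch(\sqrt{\alpha m}\,\pi) = \pi\sqrt{m/\alpha}\,\csch(\pi\sqrt{m\alpha})$. Writing $\csch$ as a geometric series, $\csch(u) = 2\sum_{j\ge0}e^{-(2j+1)u}$, turns this into a sum of terms of the form $\alpha^{-1/2}e^{-c\sqrt{\alpha}}$, whose inverse Laplace transforms are classical (they produce $x^{-1/2}e^{-c^2/(4x)}$ up to constants). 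Resumming the resulting series and matching it against the defining series \eqref{eq:jacobi} for $\theta_2(0,q)$ with $q=e^{-m\pi^2/x}$ should yield the stated closed form $\sqrt{m\pi/x}\,\theta_2(0,e^{-m\pi^2/x})$; here one must be careful to track the $q^{1/4}$ prefactor and the $k(k+1)$ exponents in the definition, and to confirm the normalization (the CDF tends to $1$ as $x\to\infty$, equivalently $\theta_2(0,q)\sim \tfrac12 q^{-1/4}$ as $q\uparrow1$ via a modular/Jacobi-imaginary-transformation argument, or simply $\lim_{\alpha\downarrow0}F(\alpha)=1$).

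The density then follows by differentiating the CDF in $x$ via the product and chain rules: one differentiates the prefactor $\sqrt{m\pi}\,x^{-1/2}$, and one differentiates $\theta_2(0,e^{-m\pi^2/x})$, where $\tfrac{d}{dx}e^{-m\pi^2/x} = (m\pi^2/x^2)e^{-m\pi^2/x}$ and $\theta_2$ is differentiated term-by-term in its series representation, producing the factor $\sum_{k\ge1}k(k+1)q^{k(k+1)}$ that appears in the statement (the shift by $-3/4$ in the exponent and the stray $\theta_2/4$ term come from bookkeeping the $2q^{1/4}$ prefactor in \eqref{eq:jacobi} against the $k=0$ term). Collecting terms over the common power $x^{5/2}$ gives the displayed formula. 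I expect the main obstacle to be purely organizational: keeping the series manipulations and the $\theta_2$ normalization conventions consistent through the inversion, rather than any deep difficulty, since every analytic ingredient (the Euler product, the inverse Laplace transform of $\alpha^{-1/2}e^{-c\sqrt\alpha}$, term-by-term differentiation justified by local uniform convergence for $x>0$) is standard.
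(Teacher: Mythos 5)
Your proposal is correct and follows essentially the same route as the paper: the product of exponential Laplace transforms identified with the Euler product for $\sinh(\pi z)/(\pi z)$, inversion of $F(\alpha)/\alpha$ to get the CDF, and term-by-term differentiation of the $\theta_2$ series for the density. The only difference is that you derive the inverse Laplace transform of $\csch(\sqrt a)/\sqrt a$ from the geometric-series expansion of $\csch$ and the classical transform of $\alpha^{-1/2}e^{-c\sqrt\alpha}$, whereas the paper simply quotes that identity.
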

\end{samepage}
\begin{proof} The Laplace transform is computed as \begin{align*}\E e^{-\alpha Nf} &=\E e^{-\sum_{x\in D}\alpha W_xf(x)}\\&=\prod_{x\in D}\E e^{-\alpha W_xf(x)}\\&=\prod_{x\in D}\int_0^\infty\D z \frac{e^{-z/m_x}}{m_x}e^{-\alpha z f(x)}\\&=\prod_{x\in D}\frac{1}{1+\alpha m_xf(x)}\end{align*} which specialized for $m_x=m$ gives the result. The inverse Laplace transform follows from  noting that \begin{align*}L^{-1}(\frac{\csch{\sqrt{a}}}{\sqrt{a}})(x)&=\frac{2}{\sqrt{\pi x}}\sum_{k\ge 0}e^{-(2k+1)^2/(4x)}\\&=\frac{\theta_2(0,e^{-1/x})}{\sqrt{\pi x}}\end{align*} so that \[L^{-1}(\sqrt{a}\csch(\sqrt{a}))(x)=\frac{\partial}{\partial x}L^{-1}(\frac{\csch{\sqrt{a}}}{\sqrt{a}})(x)\] giving the cumulative distribution function and density upon substitution of $x\gets x/(m\pi^2)$. The derivative follows from the derivative of $\theta_2$ in the second coordinate. 
\end{proof}

The frequency spectrum of $Nf$ is given by $C(\omega)=F(-i\omega)$ for $\omega\in\R$ with magnitude squared \[|C(\omega)|^2 = |\omega| m\pi^2\csch(\sqrt{-i\omega m}\pi)\csch(\sqrt{i\omega m}\pi)\for \omega\in\R\] and the phase spectrum is given by $P(\omega)=\arctan(\mathfrak{I}(C(\omega))/\mathfrak{R}(C(\omega)))$. Both are plotted below in Figure~\ref{fig:cschft} for $m=7$.

\begin{figure}[h!]
\centering
\begingroup
\captionsetup[subfigure]{width=3in,font=normalsize}
\subfloat[Frequency spectrum\label{fig:freq}]{\includegraphics[width=3.5in]{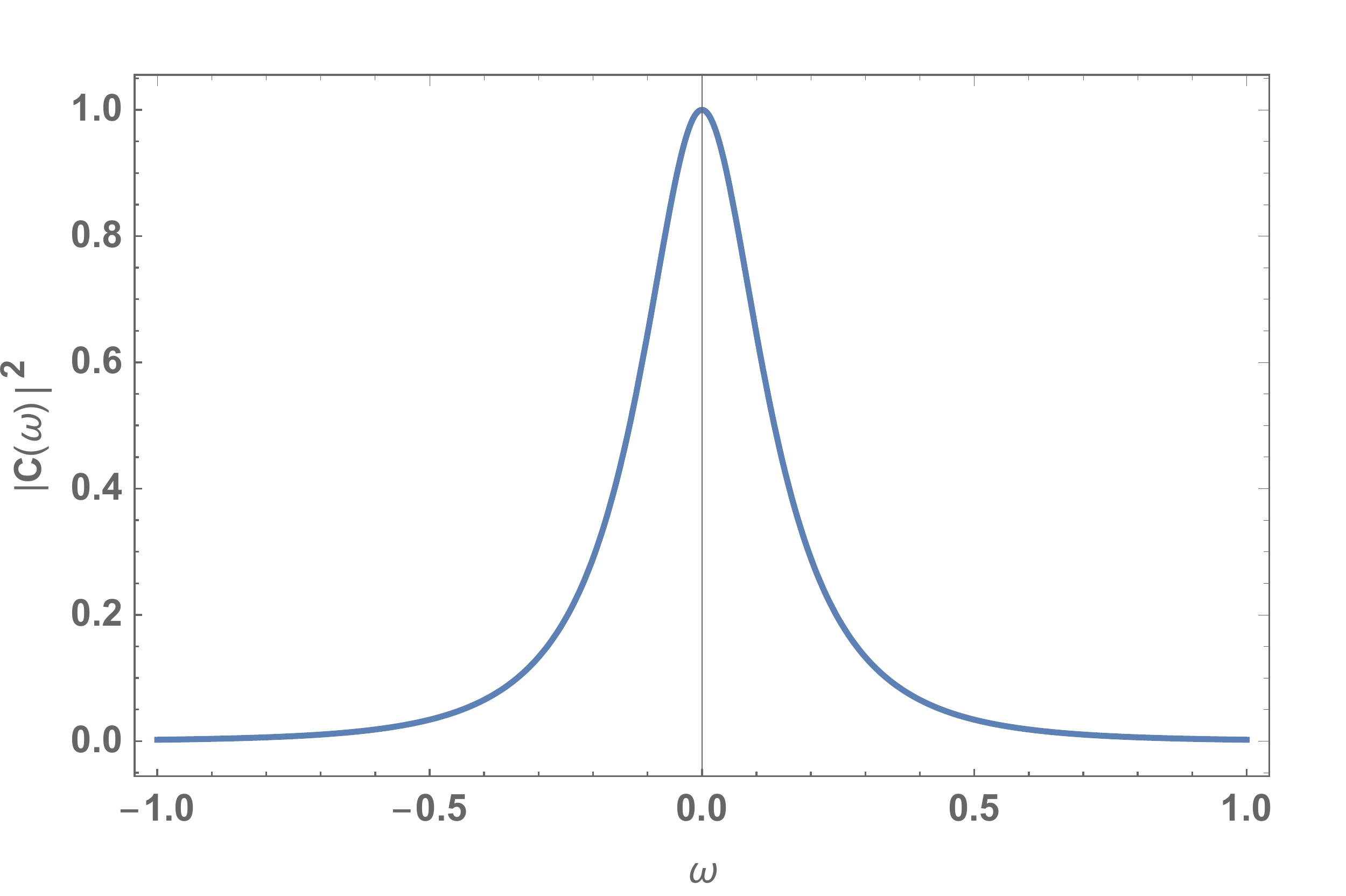}}\\
\subfloat[Phase spectrum\label{fig:phase}]{\includegraphics[width=3.5in]{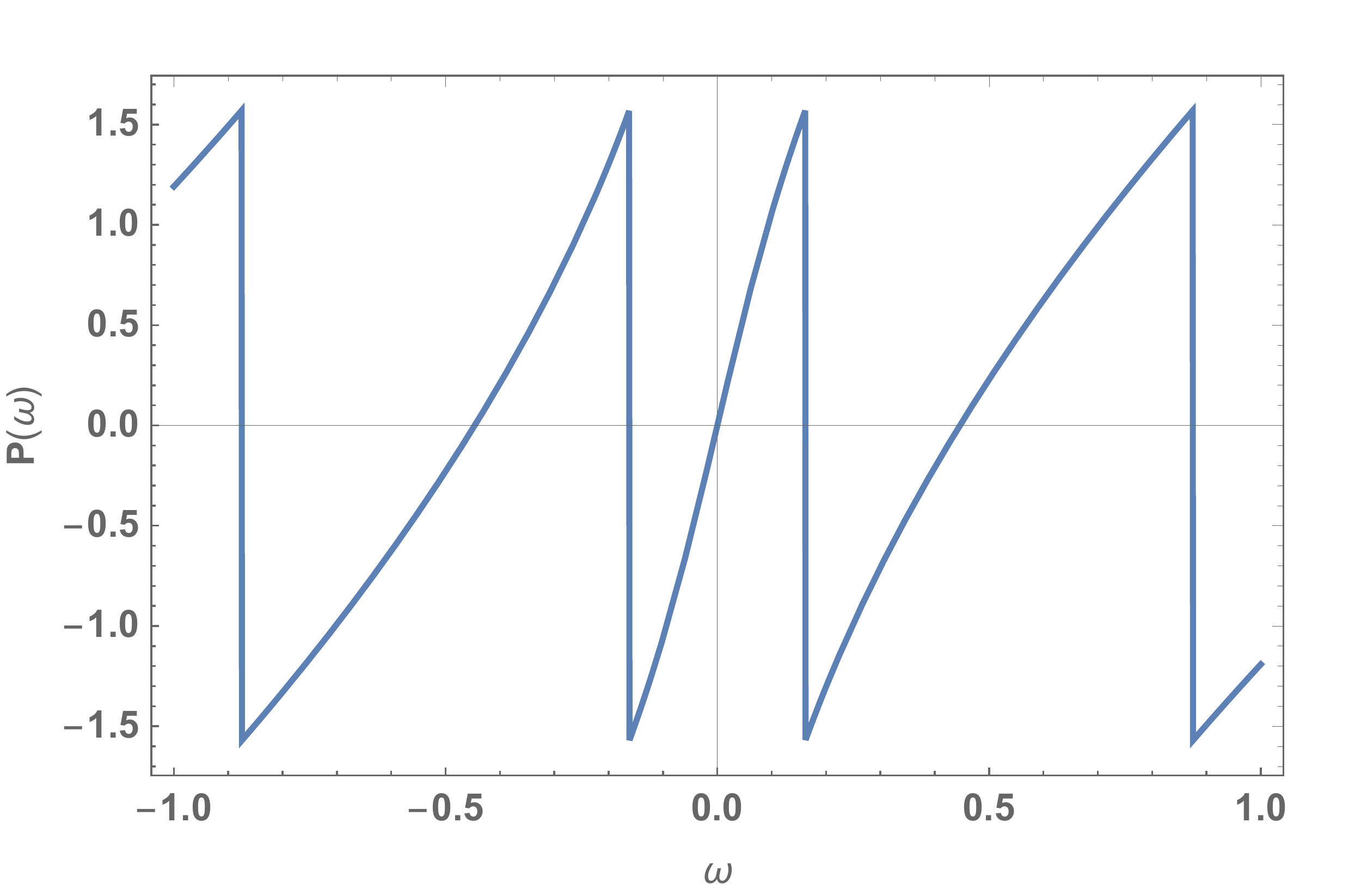}}\\
\endgroup
\caption{Frequency and phase spectra  $Nf$ for $m=7$}\label{fig:cschft}
\end{figure}

\FloatBarrier

The statistics follow and are expressed in terms of the Riemann zeta function. Both skewness and kurtosis are constant.
\begin{samepage}
\begin{cor}[Statistics] The mean, variance, and second moment are \begin{align*}\E Nf &=m\zeta(2)= m\pi^2/6\\\Var Nf &= m^2\zeta(4)=m^2\pi^4/90\\\E(Nf)^2&=7m^2\pi^4/180 \end{align*} with constant signal to noise ratio $\E Nf/\sqrt{\Var Nf}=\sqrt{5/2}\simeq1.58$, where $\zeta$ is the Riemann zeta function. The skewness is \[\text{Skewness}(Nf) = 4\sqrt{10}/7\simeq 1.81\] and the kurtosis is \[\text{Kurtosis}(Nf)=57/7\simeq8.14\] 
\end{cor}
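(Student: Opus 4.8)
The plan is to read off everything from the cumulants of $Nf$, which are unusually clean here. First, the mean and variance come directly from the additive structure recorded in Section~\ref{sec:back}: with $f(x)=1/x^2$ and each $W_x$ exponential of mean $m$ (hence variance $m^2$), $\E Nf=\sum_{x\ge1}m\,f(x)=m\sum_{x\ge1}x^{-2}=m\zeta(2)$ and $\Var Nf=\sum_{x\ge1}m^2f(x)^2=m^2\sum_{x\ge1}x^{-4}=m^2\zeta(4)$. Substituting $\zeta(2)=\pi^2/6$, $\zeta(4)=\pi^4/90$ gives the stated values, $\E(Nf)^2=\Var Nf+(\E Nf)^2=m^2\pi^4/90+m^2\pi^4/36=7m^2\pi^4/180$ after clearing denominators, and the signal-to-noise ratio is $\E Nf/\sqrt{\Var Nf}=\sqrt{\zeta(2)^2/\zeta(4)}=\sqrt{(\pi^4/36)/(\pi^4/90)}=\sqrt{5/2}$.

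For skewness and kurtosis I need $\kappa_3$ and $\kappa_4$. Rather than expand $\sqrt{\alpha m}\,\pi\csch(\sqrt{\alpha m}\,\pi)$ directly (awkward because of the branch of the square root), I would use the Weierstrass product $\sinh z=z\prod_{k\ge1}\bigl(1+z^2/(k^2\pi^2)\bigr)$; with $z=\sqrt{\alpha m}\,\pi$ this recovers exactly the product form $F(\alpha)=\prod_{x\ge1}(1+\alpha m/x^2)^{-1}$ coming from \eqref{eq:fixedlaplace}. Hence the cumulant generating function is $K(t)=\log F(-t)=-\sum_{k\ge1}\log(1-tm/k^2)=\sum_{n\ge1}\frac{m^n\zeta(2n)}{n}\,t^n$, valid for $|t|<1/m$. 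Reading off coefficients, $\kappa_n(Nf)=(n-1)!\,m^n\zeta(2n)$; equivalently each summand $W_x/x^2$ is exponential of mean $m/x^2$ with cumulants $(n-1)!(m/x^2)^n$, and cumulants add over the independency $\{W_x\}$.

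Assembling the results: with $\kappa_2=m^2\zeta(4)$, $\kappa_3=2m^3\zeta(6)$, $\kappa_4=6m^4\zeta(8)$, the skewness is $\kappa_3/\kappa_2^{3/2}=2\zeta(6)/\zeta(4)^{3/2}$ and the (Pearson, non-excess) kurtosis is $3+\kappa_4/\kappa_2^2=3+6\zeta(8)/\zeta(4)^2$; both are manifestly scale-invariant, so $m$ cancels and they are constant. Substituting $\zeta(4)=\pi^4/90$, $\zeta(6)=\pi^6/945$, $\zeta(8)=\pi^8/9450$ and simplifying (using $\sqrt{90}=3\sqrt{10}$) gives $2\cdot 90^{3/2}/945=4\sqrt{10}/7$ for the skewness and $3+6\cdot 8100/9450=3+36/7=57/7$ for the kurtosis.

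\textbf{Main obstacle.} There is no deep difficulty; the only point needing care is the justification that $K(t)$ equals the displayed power series — that the logarithm of the infinite product is the sum of logarithms and that the resulting double series over $k$ and $n$ may be reordered. This follows from absolute, locally uniform convergence of $\prod_{k\ge1}(1+\alpha m/k^2)^{-1}$ on a neighborhood of $0$ (equivalently, $Nf$ has a moment generating function finite near $0$, clear since $\zeta(2n)\le\zeta(2)$ makes $\sum_n\kappa_n t^n/n!$ converge for $|t|<1/m$). One should also note that the quoted value $57/7$ is the Pearson kurtosis; the excess kurtosis is $36/7$.
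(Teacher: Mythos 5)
Your computations are all correct: the mean, variance, second moment, SNR, skewness $4\sqrt{10}/7$, and (non-excess) kurtosis $57/7$ all check out, and your closing caveat about the kurtosis convention is accurate. The paper states this corollary without proof, but its intended machinery is visible: the mean and variance follow from the explicit sums $\E Nf=\sum_x m_xf(x)$ and $\Var Nf=\sum_x\sigma_x^2f^2(x)$ of Section~\ref{sec:back} exactly as you do it, while the higher moments would naturally be extracted by differentiating the Laplace transform four times via \eqref{eq:moments} or by the combinatorial moment formula \eqref{eq:Nn}. Your route for the third and fourth moments is genuinely different and cleaner: you pass to cumulants, using the Weierstrass product for $\sinh$ to re-derive the factorization $F(\alpha)=\prod_{k\ge1}(1+\alpha m/k^2)^{-1}$ of \eqref{eq:fixedlaplace} and obtain the closed form $\kappa_n=(n-1)!\,m^n\zeta(2n)$ in one stroke. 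What this buys is (i) an immediate proof that skewness and kurtosis are constants in $m$, since they are ratios of cumulants of matching homogeneity, and (ii) avoidance of the fourth derivative of $\sqrt{\alpha m}\,\pi\csch(\sqrt{\alpha m}\,\pi)$, which is unpleasant; what the paper's moment-based route buys is that it needs no interchange-of-summation justification beyond the finiteness already established in Proposition~\ref{prop:finite}. Your convergence remark for $|t|<1/m$ adequately covers the one point of rigor your approach introduces.
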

\end{samepage}

%

Next we show that the moments are finite at all orders and that the Jacobi theta distribution is uniquely defined by its moments.

\begin{samepage}
\begin{prop}[Finite characterizing moments]\label{prop:finite} $Nf$ has finite moments of all orders \[\E (Nf)^n <\infty\for m>0,\quad n\ge1\] with moment generating function \[M(t)=F(t)<\infty\for t\in\R\] where $F$ is the Laplace transform of $Nf$.
\end{prop}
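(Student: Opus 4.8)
The plan is to establish both claims —finiteness of all moments and the identity $M(t) = F(t)$ for all real $t$— by exploiting the product structure recorded in equation~\eqref{eq:Nn} together with the explicit form of the Laplace transform $F(\alpha) = \sqrt{\alpha m}\,\pi\csch(\sqrt{\alpha m}\,\pi)$ from Theorem~\ref{thm:csch}. The key observation is that $F$, viewed as a function of $\alpha$, extends to an analytic function in a neighborhood of the origin: $\csch$ has a simple pole at $0$ with residue $1$, so $\sqrt{\alpha m}\,\pi\csch(\sqrt{\alpha m}\,\pi)$ has a removable singularity at $\alpha = 0$ and equals $1$ there, and its only singularities in the complex $\alpha$-plane come from the zeros of $\sinh(\sqrt{\alpha m}\,\pi)$, namely $\sqrt{\alpha m}\,\pi = \iu k\pi$, i.e. $\alpha = -k^2/m$ for $k \in \N_{\ge1}$. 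Hence $F$ is analytic on the disk $|\alpha| < 1/m$, and in particular all derivatives $F^{(n)}(0)$ are finite.

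First I would prove finiteness of the moments. By equation~\eqref{eq:moments}, $\E(Nf)^n = (-1)^n \lim_{q\downarrow0}\frac{\partial^n}{\partial q^n}L(qf) = (-1)^n \lim_{q\downarrow 0} F^{(n)}(q)$, provided the limit and the differentiation are justified. Since $F$ is analytic at $0$, $F^{(n)}(q) \to F^{(n)}(0)$ as $q \downarrow 0$ and this is a finite number, so $\E(Nf)^n = (-1)^n F^{(n)}(0) < \infty$ for every $n \ge 1$. Alternatively, and perhaps more transparently, one can argue directly from equation~\eqref{eq:Nn}: with $g(x) = 1/x^2$ and $W_x$ exponential with mean $m$, one has $\E(W_{x_1}\cdots W_{x_n}) \le C_n m^n$ uniformly (indeed $\E(W_x^j) = j!\,m^j$, and by independence the mixed moment over a multi-index factors into a product of such terms, bounded by $n!\,m^n$ in the worst case of all indices equal), so $\E(Nf)^n \le n!\,m^n\bigl(\sum_{x\ge1}1/x^2\bigr)^n = n!\,m^n \zeta(2)^n < \infty$. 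Either route gives the first assertion; I would likely present the second as it is elementary and self-contained.

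Next I would establish $M(t) = F(t)$ for $t \in \R$, where $M(t) = \E e^{t\,Nf}$. For $t \le 0$ this is immediate, being the definition of the Laplace transform with $\alpha = -t \ge 0$. For $t > 0$ the content is that the moment generating function exists (is finite) and equals the analytic continuation of $F$. The cleanest approach is again via equation~\eqref{eq:Nn}: by monotone convergence, $\E e^{t\,Nf} = \sum_{n\ge0} \frac{t^n}{n!}\E(Nf)^n \le \sum_{n\ge0} \frac{t^n}{n!} \cdot n!\, m^n \zeta(2)^n = \sum_{n\ge0} (t m \zeta(2))^n$, which converges for $t < 1/(m\zeta(2)) = 6/(m\pi^2)$; this already shows $M(t) < \infty$ on a neighborhood of $0$, hence $Nf$ is determined by its moments (Proposition~\ref{prop:finite}'s stated purpose). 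To get finiteness for \emph{all} real $t$ and the identity with $F$, I would instead bound $Nf$ more carefully: since $Nf = \sum_{x\ge1} W_x/x^2$ with independent exponential $W_x$, the product formula gives $\E e^{t\,Nf} = \prod_{x\ge1}\E e^{t W_x/x^2} = \prod_{x\ge1}(1 - t m/x^2)^{-1}$ for every $t$ with $t m/x^2 < 1$ for all $x$, i.e. for $t < 1/m$ (and $t \ge 0$); each factor is positive and finite, and the infinite product converges because $\sum_{x\ge1} tm/x^2 < \infty$. Thus $M(t) = \prod_{x\ge1}(1 - tm/x^2)^{-1}$, and by the classical product expansion $\sinh(\pi z)/(\pi z) = \prod_{x\ge1}(1 + z^2/x^2)$ applied with $z^2 = -tm$, this product equals $\pi\sqrt{tm}/\sinh(\pi\sqrt{tm}) = \sqrt{tm}\,\pi\csch(\sqrt{tm}\,\pi) = F(t)$ — the same closed form, now on the real interval $(-\infty, 1/m)$. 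For $t \ge 1/m$ the factor with $x=1$ forces $\E e^{t W_1} = \infty$, so $M(t) = \infty$; one should note that the paper's claim "$M(t) = F(t) < \infty$ for $t \in \R$" must be read with the convention that $F$ is the meromorphic extension and the statement is really the standard fact that $Nf$ has an MGF finite in a neighborhood of the origin (which is all that is needed for moment-determinacy), so I would state the result as: $M(t)$ is finite for $t$ in a neighborhood of $0$ and agrees there with $F$, whence the moment problem for $Nf$ is determinate by the classical criterion (finite MGF near $0$ $\Rightarrow$ unique law).

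The main obstacle is the mild discrepancy between the claimed range "$t \in \R$" and the genuine domain of finiteness of the MGF: because the first summand $W_1$ is an unbounded exponential variable, $\E e^{t W_1} = \infty$ for $t \ge 1/m$, so the MGF cannot literally be finite on all of $\R$. The honest and self-contained thing is to prove finiteness on $(-\infty, 1/m)$ via the infinite-product identity for $\sinh$, identify the product with $F$ there, and invoke the standard determinacy criterion — this is what actually supports the distribution being "uniquely defined by its moments." I expect the interchange-of-limits steps (term-by-term differentiation of the product in equation~\eqref{eq:fixedlaplace}, monotone convergence in the series for $M(t)$, and convergence of the infinite product) to be routine given the $\sum 1/x^2 < \infty$ control, so the writeup will be short once the domain issue is handled cleanly.
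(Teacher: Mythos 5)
Your argument for the finiteness of the moments is exactly the paper's: the paper also invokes equation~\eqref{eq:Nn}, bounds the mixed moment $\E(W_x\dotsb W_y)$ by $\E W_x^n = n!\,m^n$, and concludes $\E(Nf)^n \le n!\,(m\zeta(2))^n < \infty$. Where you genuinely diverge is on the moment generating function. The paper's entire treatment of that claim is the one line ``$M(t)=F(-t)=F(t)$ for $t\in\R$,'' with no discussion of domain. Your analysis is more careful and, in fact, exposes a real defect in the statement: since $Nf \ge W_1$ pointwise and $W_1$ is exponential with mean $m$, one has $\E e^{tNf} \ge \E e^{tW_1} = \infty$ for every $t \ge 1/m$, so the claim ``$M(t)<\infty$ for $t\in\R$'' is false as written. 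Your infinite-product computation $M(t)=\prod_{x\ge1}(1-tm/x^2)^{-1}$ on $(-\infty,1/m)$, matched to $F$ via the classical product expansion of $\sinh$, is the correct repair: it establishes finiteness of the MGF in a neighborhood of the origin, which is all that is needed for the distribution to be determined by its moments (the proposition's evident purpose). In short: part one is the paper's proof verbatim; part two is a correction the paper needs, and your proposed restatement (finiteness on $(-\infty,1/m)$ plus the standard determinacy criterion) is what the proposition should actually say.
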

\end{samepage}
\begin{proof}The finiteness of all moments follows from \eqref{eq:Nn}, where \begin{align*}\E(Nf)^n &= \sum_{(x,\dotsb,y)\in D^n}\E(W_x\dotsb W_y)f(x)\dotsb f(y)\\&\le \sum_{(x,\dotsb,y)\in D^n}\E W_x^n f(x)\dotsb f(y)\\&=n!(m\zeta(2))^n<\infty\for m>0,\quad n\ge1\end{align*} For the mgf, we have that $M(t)=F(-t)=F(t)$ for $t\in\R$.
\end{proof}

Below in Figure~\ref{fig:csch} we show the densities for $m=1,\dotsb,7$. 
\begin{figure}[h!]
\centering
\includegraphics[width=6in]{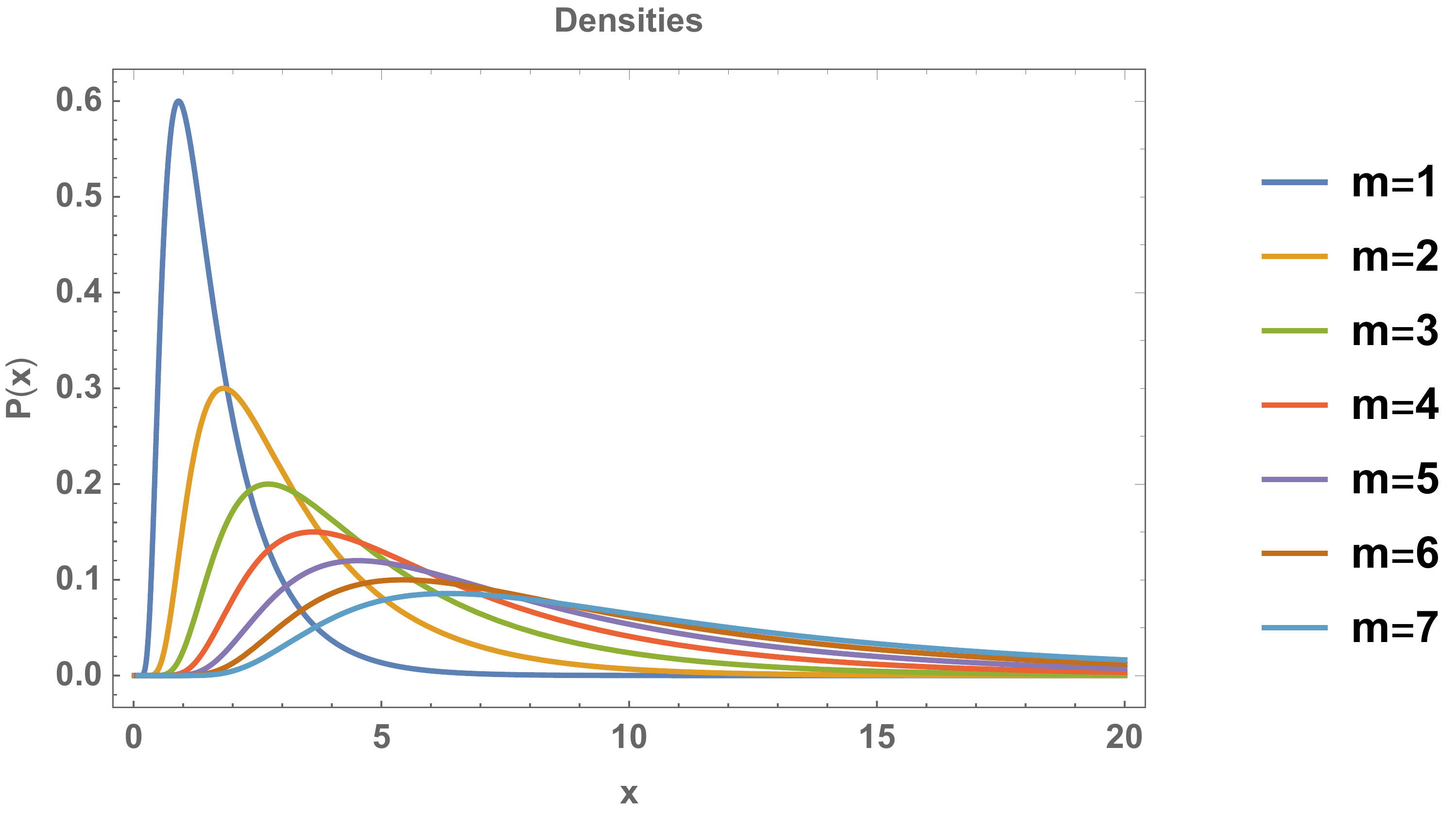}
\caption{Density $\eta$ of Jacobi theta distribution for $m=1,\dotsb,7$}\label{fig:csch}
\end{figure}
\FloatBarrier

The CDF may be used to furnish an estimator for $m$ given data. 

\begin{samepage}
\begin{prop}[Estimator] Let $\{X_i=1,\dotsb,n\}$ be an independency of data with empirical distribution function $F_n$. Let $u\in(0,1)$ be a cdf value at the point $x=F_n^{-1}(u)$, e.g., $u=1/2$. Then $m$ may be estimated by numerically finding the root of the following equation \[\sqrt{\frac{m\pi}{x}}\theta _2\left(0,e^{-\frac{m \pi ^2}{x}}\right)=u\] 
\end{prop}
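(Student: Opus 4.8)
The plan is to read this estimator as a method‑of‑quantiles (percentile‑matching) procedure. Write $G_m(x):=\eta(Nf\le x)=\sqrt{m\pi/x}\,\theta_2(0,e^{-m\pi^2/x})$ for the theoretical cdf from Theorem~\ref{thm:csch}, displaying the dependence on the parameter $m$. The displayed equation is exactly $G_m(x)=u$, i.e.\ it matches the model cdf, evaluated at the observed sample quantile $x=F_n^{-1}(u)$, to the level $u=F_n(x)$. There are two things to verify: (a) that $G_m(x)=u$ has a unique solution $\hat m\in(0,\infty)$ for each $u\in(0,1)$ and $x>0$, so the estimator is well defined; and (b) that $\hat m$ is consistent.

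For (a), the key step is the scaling identity $G_m(x)=G_1(x/m)$. This is immediate from the Laplace transform $\sqrt{\alpha m}\pi\csch(\sqrt{\alpha m}\pi)$, which depends on $(\alpha,m)$ only through the product $\alpha m$: if $Y$ denotes a random variable with the $m=1$ Jacobi theta law, then $\E e^{-\alpha(mY)}=\sqrt{\alpha m}\pi\csch(\sqrt{\alpha m}\pi)$, so $Nf\stackrel{d}{=}mY$ and hence $G_m(x)=\P(mY\le x)=G_1(x/m)$. By Theorem~\ref{thm:csch}, $G_1$ is the cdf of a distribution supported on all of $\R_{>0}$, so it is continuous and strictly increasing on $(0,\infty)$, with $G_1(0^+)=0$ and $G_1(+\infty)=1$. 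Consequently, for fixed $x>0$ the map $m\mapsto G_m(x)=G_1(x/m)$ is continuous and strictly decreasing, with $\lim_{m\downarrow0}G_1(x/m)=1$ and $\lim_{m\uparrow\infty}G_1(x/m)=0$; by the intermediate value theorem it is a decreasing bijection of $(0,\infty)$ onto $(0,1)$, so the root $\hat m=\hat m(x,u)$ exists and is unique, and the implicit map $(x,u)\mapsto\hat m(x,u)$ is continuous.

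For (b), I would apply the Glivenko--Cantelli theorem, $\sup_t|F_n(t)-G_m(t)|\to0$ almost surely. Since $G_m$ is continuous and strictly increasing near the level $u\in(0,1)$, the sample quantile converges, $F_n^{-1}(u)\to G_m^{-1}(u)$ almost surely (using the generalized inverse $F_n^{-1}(u)=\inf\{t:F_n(t)\ge u\}$). By construction $\hat m\bigl(G_m^{-1}(u),u\bigr)=m$, because $G_m(G_m^{-1}(u))=u$ and the root is unique; combining with the continuity of $(x,u)\mapsto\hat m(x,u)$ from step (a) gives $\hat m\bigl(F_n^{-1}(u),u\bigr)\to m$ almost surely, which is the asserted consistency.

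The main obstacle is the monotonicity claim in step (a) — precisely, that the Jacobi theta cdf is a strictly increasing homeomorphism of $\R_{>0}$ onto $(0,1)$ and, simultaneously, monotone in $m$. The scaling identity $G_m(x)=G_1(x/m)$ is what makes this painless: it collapses the joint behaviour in $(m,x)$ to the single elementary fact that a cdf with full support on $\R_{>0}$ has a continuous, strictly increasing inverse. Without that identity one would be pushed into a direct analysis of the theta‑series expression for $G_m$, which is considerably messier; with it, the remainder is routine real analysis.
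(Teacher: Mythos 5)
Your proposal is correct, and it actually supplies considerably more than the paper does: the paper states this proposition with no proof at all (unlike the companion asymptotic estimator, which gets a one-line ``solve for $m_n$''), so there is nothing to match against except the implicit claim that the root exists, is unique, and is a sensible estimate. Your two additions are exactly the right ones. The scaling identity $Nf\stackrel{d}{=}mY$ (equivalently $G_m(x)=G_1(x/m)$), read off from the Laplace transform depending on $(\alpha,m)$ only through $\alpha m$, reduces well-definedness of the root to the single fact that $G_1$ is a continuous strictly increasing bijection of $\R_{>0}$ onto $(0,1)$, and then Glivenko--Cantelli plus continuity of the implicit root map gives consistency. The one point you assert rather than prove is that $G_1$ is \emph{strictly} increasing on all of $(0,\infty)$; ``supported on the positive reals'' as stated in Theorem~\ref{thm:csch} does not by itself rule out the density vanishing on some subinterval. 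This is easily patched from the series representation: $Nf=W_1+R$ with $W_1$ exponential (full support on $\R_{>0}$) and $R=\sum_{x\ge2}W_x/x^2\ge0$ satisfying $\P(R<\epsilon)>0$ for every $\epsilon>0$, so the law of $Nf$ is a convolution with a strictly positive exponential density and hence has strictly positive density on all of $(0,\infty)$. With that line added, your argument is complete and is the proof the paper omits.
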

\end{samepage}

\section{Approximations}\label{sec:relation} 

We discuss how the Jacobi theta distribution may be approximated through asymptotics and the log-normal distribution. 

\begin{samepage}
\subsection{Asymptotic} \begin{prop}[First-order] The Jacobi theta distribution cumulative distribution function may be approximated to first-order as \[\P(Nf\le x)\simeq 2\sqrt{\frac{m\pi}{x}}e^{-\frac{m \pi ^2}{4x}}\for x\le m\pi^2/2\] with total mass $\P(Nf\le m\pi^2/2)=2\sqrt{\frac{2}{e\pi}}\simeq 0.968$ and density function \[\eta(\D x)\simeq \frac{\sqrt{\pi } e^{-\frac{\pi ^2 m}{4 x}} \left(\pi ^2 m-2 x\right) \sqrt{\frac{m}{x}}}{2 x^2}\D x\for x\in(0,m\pi^2/2]\]
\end{prop}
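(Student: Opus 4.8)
The plan is to start from the exact cumulative distribution function of Theorem~\ref{thm:csch} and replace the Jacobi theta function by the first term of the series in Definition~\ref{re:jacobi}. With $z=0$ and $q=e^{-m\pi^2/x}$ one has $\theta_2(0,q)=2q^{1/4}\sum_{k\ge0}q^{k(k+1)}=2e^{-m\pi^2/(4x)}\bigl(1+\sum_{k\ge1}e^{-m\pi^2 k(k+1)/x}\bigr)$; equivalently this is the statement that the inverse-Laplace series $\tfrac{2}{\sqrt{\pi x}}\sum_{k\ge0}e^{-(2k+1)^2/(4x)}$ appearing in the proof of Theorem~\ref{thm:csch} is dominated by its $k=0$ term, or that $\csch y\simeq 2e^{-y}$ for $y$ large. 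Retaining only $k=0$ gives $\theta_2(0,q)\simeq 2e^{-m\pi^2/(4x)}$, and substituting into $\sqrt{m\pi/x}\,\theta_2\bigl(0,e^{-m\pi^2/x}\bigr)$ produces the stated first-order form $\P(Nf\le x)\simeq 2\sqrt{m\pi/x}\,e^{-m\pi^2/(4x)}$.

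For the density I would differentiate the first-order CDF $g(x)=2\sqrt{m\pi}\,x^{-1/2}e^{-m\pi^2/(4x)}$ in $x$. The product rule yields two terms which, collected over the common factor $x^{-5/2}e^{-m\pi^2/(4x)}$, give $g'(x)=\tfrac{\sqrt{m\pi}}{2}\,x^{-5/2}e^{-m\pi^2/(4x)}(m\pi^2-2x)$; rewriting $\sqrt{m\pi}\,x^{-5/2}=\sqrt{\pi}\sqrt{m/x}\,x^{-2}$ recovers the displayed density. The linear factor $m\pi^2-2x$ is exactly the source of the domain restriction: $g'$ is nonnegative, and hence a candidate density, only for $x\le m\pi^2/2$, and past that point $g$ is decreasing and cannot be a distribution function, so $m\pi^2/2$ is the natural right endpoint. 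Evaluating $g$ there gives the total mass: the exponent collapses to $\tfrac{m\pi^2}{4}\cdot\tfrac{2}{m\pi^2}=\tfrac12$ and the prefactor to $2\sqrt{2/\pi}$, so $\P(Nf\le m\pi^2/2)\simeq 2\sqrt{2/\pi}\,e^{-1/2}=2\sqrt{2/(e\pi)}\simeq0.968$.

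There is no real obstacle here; the only point needing care is the precise sense of the approximation. The error from dropping the $k\ge1$ terms is controlled by the first discarded term, of relative size $O(e^{-2m\pi^2/x})$, which is genuinely small as $x\downarrow0$ but not uniformly so; the cutoff $x\le m\pi^2/2$ is therefore not where the truncation error becomes large, but simply the largest interval on which the leading-order expression still behaves like a cumulative distribution function. Stating the result as an asymptotic (rather than uniform) approximation on that interval is the honest formulation, and is all that is needed for the subsequent log-normal comparison.
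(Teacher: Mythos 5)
Your proposal is correct and follows exactly the paper's route: the paper's proof is the one-line observation that the result ``follows from the first-order truncation of representation \eqref{eq:jacobi},'' which is precisely your $k=0$ truncation of $\theta_2(0,q)$ substituted into the exact CDF of Theorem~\ref{thm:csch}, followed by differentiation. Your added remarks on the sign of the factor $m\pi^2-2x$ as the source of the domain restriction and on the size of the truncation error are sound elaborations of what the paper leaves implicit.
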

\end{samepage}
\begin{proof} The proof follows from the first-order truncation of representation \eqref{eq:jacobi}.
\end{proof}

Higher-order approximations are admitted in view of \eqref{eq:jacobi}.

We show the cumulative distribution function of the Jacobi theta distribution for $m=7$ against the asymptotic approximation on the given support.

\begin{figure}[h!]
\centering
\includegraphics[width=6in]{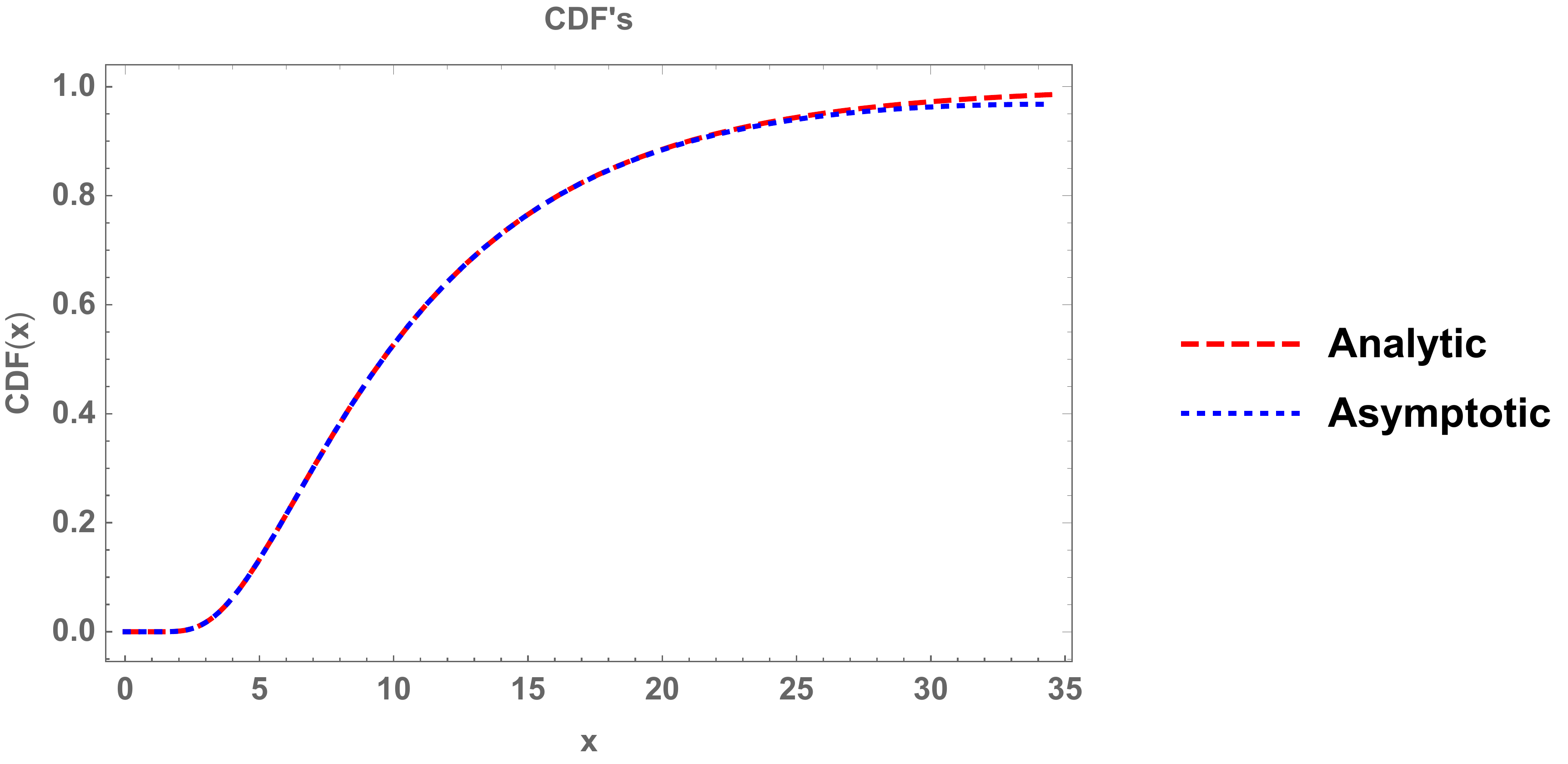}
\caption{Cumulative distribution function of Jacobi theta distribution for $m=7$ and asymptotic approximation}\label{fig:csch2}
\end{figure}
\FloatBarrier

The asymptotic approximation may be used to furnish an estimator for $m$ given data. 

\begin{samepage}
\begin{prop}[Estimator] Let $\{X_i=1,\dotsb,n\}$ be an independency of Jacobi theta random variables with empirical distribution function $F_n$ converging almost surely to $F$. Let $u\in(0,2\sqrt{\frac{2}{e\pi}}]\subset(0,1)$ be an argument to $F^{-1}$ such that $F^{-1}(u)$ is closely approximated by $F_n^{-1}(u)$, e.g., $u=1/2$. Then $m$ is estimated as  \[m_n=F_n^{-1}(u)\left(\frac{-2W_{-1}\left(-\pi  u^2/8\right)}{\pi ^2}\right)\] where $W$ is the product-logarithm function. 
\end{prop}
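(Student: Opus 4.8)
The plan is to read the estimator as a quantile-matching procedure built on the first-order approximation of the preceding proposition, and then to carry out the required inversion explicitly with the Lambert $W$ function. Write $g(m;x)=2\sqrt{m\pi/x}\,e^{-m\pi^2/(4x)}$ for the first-order approximation to $\P(Nf\le x)$, valid on $(0,m\pi^2/2]$. The estimator is defined by imposing $g\bigl(m;F_n^{-1}(u)\bigr)=u$ and solving for $m$. Since $F_n\to F$ almost surely (Glivenko--Cantelli) and $F$ is continuous and strictly increasing on the relevant range, $F_n^{-1}(u)\to F^{-1}(u)$ a.s. at the continuity point, while on $(0,m\pi^2/2]$ one has $g(m;\cdot)\approx F(\cdot)$ by the previous proposition; so the root extracted this way is consistent up to the asymptotic-approximation error. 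The substantive content is therefore the algebraic solution of $g(m;x)=u$ for $m$.

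First I would reduce the equation to a one-variable form. Setting $a=m/x>0$, the defining equation $2\sqrt{\pi a}\,e^{-\pi^2 a/4}=u$ has both sides strictly positive, so squaring is reversible and yields $4\pi a\,e^{-\pi^2 a/2}=u^2$, i.e. $a\,e^{-\pi^2 a/2}=u^2/(4\pi)$. I would then linearize the exponent by the substitution $w=-\pi^2 a/2$, which gives $w\,e^{w}=-\pi u^2/8$, exactly the defining relation of the Lambert $W$ function evaluated at $-\pi u^2/8$.

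The branch selection is the one point needing care. The argument $-\pi u^2/8$ lies in $[-1/e,0)$ precisely when $u\le 2\sqrt{2/(e\pi)}$, which is the stated range of $u$; on this interval both real branches $W_0$ and $W_{-1}$ are defined. The constraint under which the first-order approximation is valid, $x\le m\pi^2/2$, is equivalent to $a=m/x\ge 2/\pi^2$, hence to $w=-\pi^2 a/2\le -1$; since $W_0$ takes values in $[-1,\infty)$ while $W_{-1}$ takes values in $(-\infty,-1]$, the admissible root is $w=W_{-1}(-\pi u^2/8)$. Back-substituting, $a=-2W_{-1}(-\pi u^2/8)/\pi^2$ and $m=ax$; putting $x=F_n^{-1}(u)$ yields the claimed $m_n=F_n^{-1}(u)\bigl(-2W_{-1}(-\pi u^2/8)/\pi^2\bigr)$.

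I expect the only genuine obstacle to be justifying the branch choice cleanly, i.e. tying the admissibility window $x\le m\pi^2/2$ of the asymptotic approximation to the inequality $w\le -1$ that separates the two Lambert branches, together with the brief remark that $F_n^{-1}(u)\to F^{-1}(u)$ a.s. so that $m_n$ is, up to the first-order error, consistent. The remaining steps are elementary manipulations.
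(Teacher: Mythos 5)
Your proposal is correct and follows the same route as the paper, whose proof consists solely of the sentence ``we solve the equation $2\sqrt{m_n\pi/x_n}\,e^{-m_n\pi^2/(4x_n)}=u$ for $m_n$''; you simply carry out that inversion explicitly. Your additional verification that the argument $-\pi u^2/8$ lies in $[-1/e,0)$ exactly on the stated range of $u$, and that the validity window $x\le m\pi^2/2$ forces the $W_{-1}$ branch, supplies the branch-selection justification the paper leaves implicit.
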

\end{samepage}
\begin{proof} We solve the equation $2\sqrt{\frac{m_n\pi}{x_n}}e^{-\frac{m_n \pi ^2}{4x_n}}=u$ for $m_n$, where $x_n=F_n^{-1}(u)$. 

\end{proof}
%

\subsection{Log-normal} The Jacobi theta distribution can be approximated by the log-normal distribution $\text{LogNormal}(\mu,\sigma)$.  Matching first and second moments, we have \begin{align}\mu&=\log \left(\frac{1}{6} \sqrt{\frac{5}{7}} \pi ^2 m\right)\label{eq:first}\\\sigma&=\sqrt{\log \left(\frac{7}{5}\right)}\label{eq:second}\end{align} which gives approximate density \[\widetilde{\eta}(\D x) = \frac{\exp \left(-\frac{\left(\log \left(\frac{1}{6} \sqrt{\frac{5}{7}} \pi ^2 m\right)-\log (x)\right)^2}{2 \log \left(\frac{7}{5}\right)}\right)}{x \sqrt{2 \pi  \log \left(\frac{7}{5}\right)}}\D x\] and cumulative distribution function \[\widetilde{\eta}(t\le x)=\frac{1}{2}(1+\erf\left(\frac{\log x - \mu}{\sigma\sqrt{2}}\right))\] The skewness is $17\sqrt{2/5}/5\simeq2.15$ and the kurtosis is $7\,631/625\simeq 12.21$, both independent of $m$.

The entropy can be approximated as \[H\simeq \log_2(\sigma\sqrt{2\pi}\exp(\mu+1/2))=\log_2 \left(\frac{1}{3} \sqrt{\frac{5 e}{7}} \pi ^{5/2} m \sqrt{\log \left(\sqrt{\frac{7}{5}}\right)}\right)\sim O(\log_2(m))\]

In Figure~\ref{fig:csch2} we show the cumulative distribution function of the Jacobi theta distribution for $m=7$ against the matching log-normal distribution. The approximation is very good.


\begin{figure}[h!]
\centering
\includegraphics[width=6in]{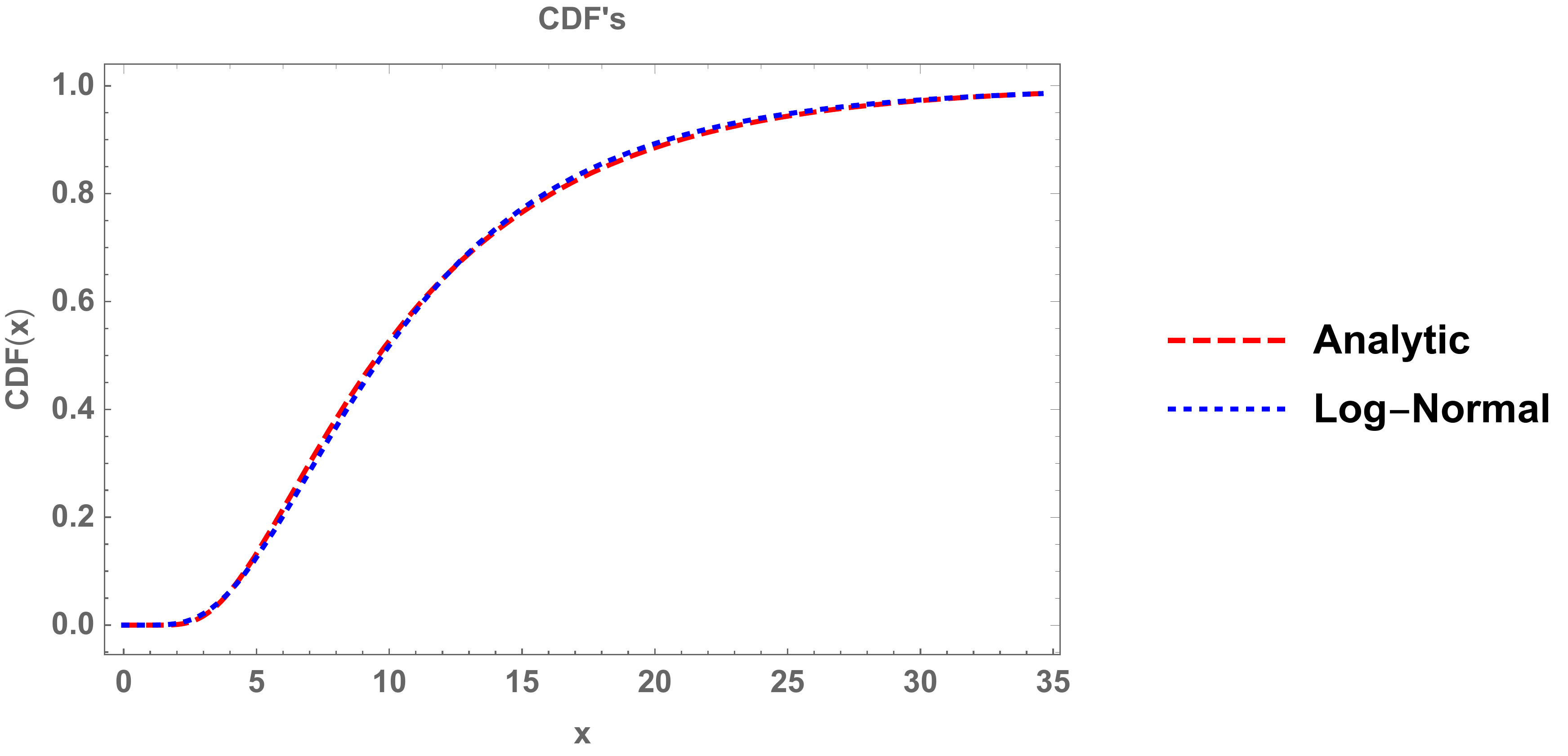}
\caption{Cumulative distribution function of Jacobi theta distribution for $m=7$ and log-normal approximation}\label{fig:csch2}
\end{figure}
\FloatBarrier

\subsection{Maximum likelihood}

\begin{prop}[MLE] Let $\mathbf{X}=\{X_i:i=1,\dotsb,n\}$ be a collection of data. Then the log-normal maximum likelihood estimator with parameters $\mu$ \eqref{eq:first} and $\sigma$ \eqref{eq:second} for $m\in(0,\infty)$ of the Jacobi theta distribution is given by \[\hat{m}= \frac{6}{\pi^2}\sqrt{\frac{7}{5}}\left(\prod_{i}^nX_i\right)^{1/n}\sim\text{LogNormal}(\mu-\sigma^2/(2n)+\log\left(\frac{6}{\pi^2}\sqrt{\frac{7}{5}}\right),\sigma/\sqrt{n})\] with mean and variance \begin{align*}\E(\hat{m})&=\frac{6}{\pi^2}\sqrt{\frac{7}{5}} \exp\mu=m\\\Var(\hat{m})&=m^2\left(\exp(\frac{\sigma ^2}{n})-1\right)\xrightarrow{n \text{ large}}m^2\left(\frac{\sigma ^2}{n}\right)\end{align*}
\end{prop}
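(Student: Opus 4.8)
The idea is that, inside the log-normal surrogate family of Section~\ref{sec:relation}, the only parameter depending on $m$ is $\mu$, since $\sigma$ \eqref{eq:second} is a fixed constant. First I would write the log-normal log-likelihood of $\mathbf{X}$ as a function of $m$; discarding all terms free of $m$, it reduces to $-\tfrac{n}{2\sigma^2}\bigl(\hat\mu_n-\mu(m)\bigr)^2$, where $\hat\mu_n=\tfrac1n\sum_i\log X_i$ is the sample mean of the log-data and $\mu(m)=\log\bigl(\tfrac16\sqrt{5/7}\,\pi^2 m\bigr)$ is the reparametrization \eqref{eq:first}. Because $m\mapsto\mu(m)$ is a strictly increasing continuous bijection of $(0,\infty)$ onto $\R$, the maximizer is the unique $\hat m$ solving $\mu(\hat m)=\hat\mu_n$.

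Next I would invert that identity. Solving \eqref{eq:first} for $m$ gives $m=\tfrac{6}{\pi^2}\sqrt{7/5}\,e^{\mu(m)}$, hence $\hat m=\tfrac{6}{\pi^2}\sqrt{7/5}\,e^{\hat\mu_n}=\tfrac{6}{\pi^2}\sqrt{7/5}\,\bigl(\prod_i^n X_i\bigr)^{1/n}$, a rescaled geometric mean of the data, which is the displayed estimator.

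For the sampling law I would argue under the log-normal model, where $\log X_1,\dots,\log X_n$ are i.i.d.\ $\mathcal{N}(\mu,\sigma^2)$, so $\hat\mu_n\sim\mathcal{N}(\mu,\sigma^2/n)$. Since $\log\hat m=\log\bigl(\tfrac{6}{\pi^2}\sqrt{7/5}\bigr)+\hat\mu_n$ is an affine image of a Gaussian, $\hat m$ is log-normal. The plug-in estimator is multiplicatively biased by the factor $e^{\sigma^2/(2n)}$; writing the log-normal parameters for the standard bias-corrected version absorbs this into the location parameter and gives $\hat m\sim\text{LogNormal}\bigl(\mu-\sigma^2/(2n)+\log(\tfrac{6}{\pi^2}\sqrt{7/5}),\ \sigma/\sqrt{n}\bigr)$. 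Finally I would substitute $a=\mu-\sigma^2/(2n)+\log(\tfrac{6}{\pi^2}\sqrt{7/5})$ and $b=\sigma/\sqrt{n}$ into the log-normal moment formulas $\E=e^{a+b^2/2}$ and $\Var=(e^{b^2}-1)e^{2a+b^2}$: the two $\sigma^2/(2n)$ contributions cancel in the mean, giving $\E\hat m=\tfrac{6}{\pi^2}\sqrt{7/5}\,e^\mu=m$, while $\Var\hat m=(e^{\sigma^2/n}-1)(\E\hat m)^2=m^2(e^{\sigma^2/n}-1)$, whose first-order expansion in $1/n$ is $m^2\sigma^2/n$.

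The one point needing care is the finite-sample bias bookkeeping: one must verify that the location shift $-\sigma^2/(2n)$ is exactly what makes $\E\hat m=m$, so that the reported log-normal parameters are mutually consistent with the stated mean and variance; everything else is routine Gaussian and log-normal algebra. It is also worth flagging that ``MLE'' here means maximum likelihood within the log-normal approximating family, not with respect to the exact Jacobi theta likelihood, whose likelihood equations do not close in elementary form.
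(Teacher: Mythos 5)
Your proposal follows essentially the same route as the paper's (very terse) proof: take the log-normal MLE $\hat\mu_n=\frac1n\sum_i\log X_i$, use that $\sigma$ in \eqref{eq:second} is a fixed constant so the likelihood is maximized by matching $\mu(\hat m)=\hat\mu_n$, invert \eqref{eq:first} to get the rescaled geometric mean, and read off the exact Gaussian law of $\hat\mu_n$ under the surrogate model. All of that is correct and is exactly what the paper intends.

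The one substantive point is the bias bookkeeping you flag at the end, and it deserves to be stated more bluntly than you do. Under the log-normal model the displayed estimator satisfies $\log\hat m=\log\bigl(\tfrac{6}{\pi^2}\sqrt{7/5}\bigr)+\hat\mu_n$, so its law is $\text{LogNormal}\bigl(\mu+\log(\tfrac{6}{\pi^2}\sqrt{7/5}),\,\sigma/\sqrt n\bigr)$ with mean $m\,e^{\sigma^2/(2n)}$, \emph{not} the law $\text{LogNormal}\bigl(\mu-\sigma^2/(2n)+\log(\tfrac{6}{\pi^2}\sqrt{7/5}),\,\sigma/\sqrt n\bigr)$ asserted in the proposition; the latter is the law of the bias-corrected estimator $e^{-\sigma^2/(2n)}\hat m$. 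Your phrase ``absorbs this into the location parameter'' quietly swaps one estimator for the other: the displayed formula for $\hat m$ and the displayed sampling distribution cannot both be exactly right. This inconsistency is in the proposition itself (the paper's proof does not address it either), so your derivation is as correct as the statement allows, but a clean write-up should either insert the factor $e^{-\sigma^2/(2n)}$ into the estimator or state that unbiasedness holds only up to $O(1/n)$.
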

\begin{proof} The parameter $m$ can be estimated using the log-normal maximum likelihood estimator of $\mu$ \[\hat{\mu}=\frac{\sum_i^n \log(X_i)}{n}\xrightarrow{n \text{ large}}\text{Gaussian}(\mu,\sigma^2/n)\] The log-normal distribution follows from the MLE of the log-normal distribution. 
\end{proof}


\section{Applications}\label{sec:application} The Jacobi theta distribution can be applied to any problem where the random variable of interest is an infinite superposition of exponential random variables with inverse quadratically scaled means relative to the positive integers. Superpositions at integer points correspond to discrete integration over an inverse square law surface.  Inverse square laws are found in many applications, including gravitation, electric fields and forces, intensity of light, radiation from a source, intensity of sound, and so on. Exponential distributions arise as the maximum entropy distribution on the positive reals with fixed mean.

\subsection{Maximum likelihood estimation} Consider a random sample of data $\mathbf{X}=\{X_i:i=1,\dotsb,n\}$ distributed according to the Jacobi theta distribution with parameter $m\in(0,\infty)$. We estimate the parameter $m$ using the exact and asymptotic CDF methods and the log-normal approximation. We take $m=7$, $n=10^2$ and generate $10^4$ samples of $\mathbf{X}$ as $\{\mathbf{X}^j:j=1,\dotsb,10^4\}$. For each $\mathbf{X}^j$ we estimate $m$ using the three estimators. We show the histogram distributions of the estimators below in Figure~\ref{fig:estimators}. The log-normal estimator has the smallest variance.

\begin{figure}[h!]
\centering
\includegraphics[width=6in]{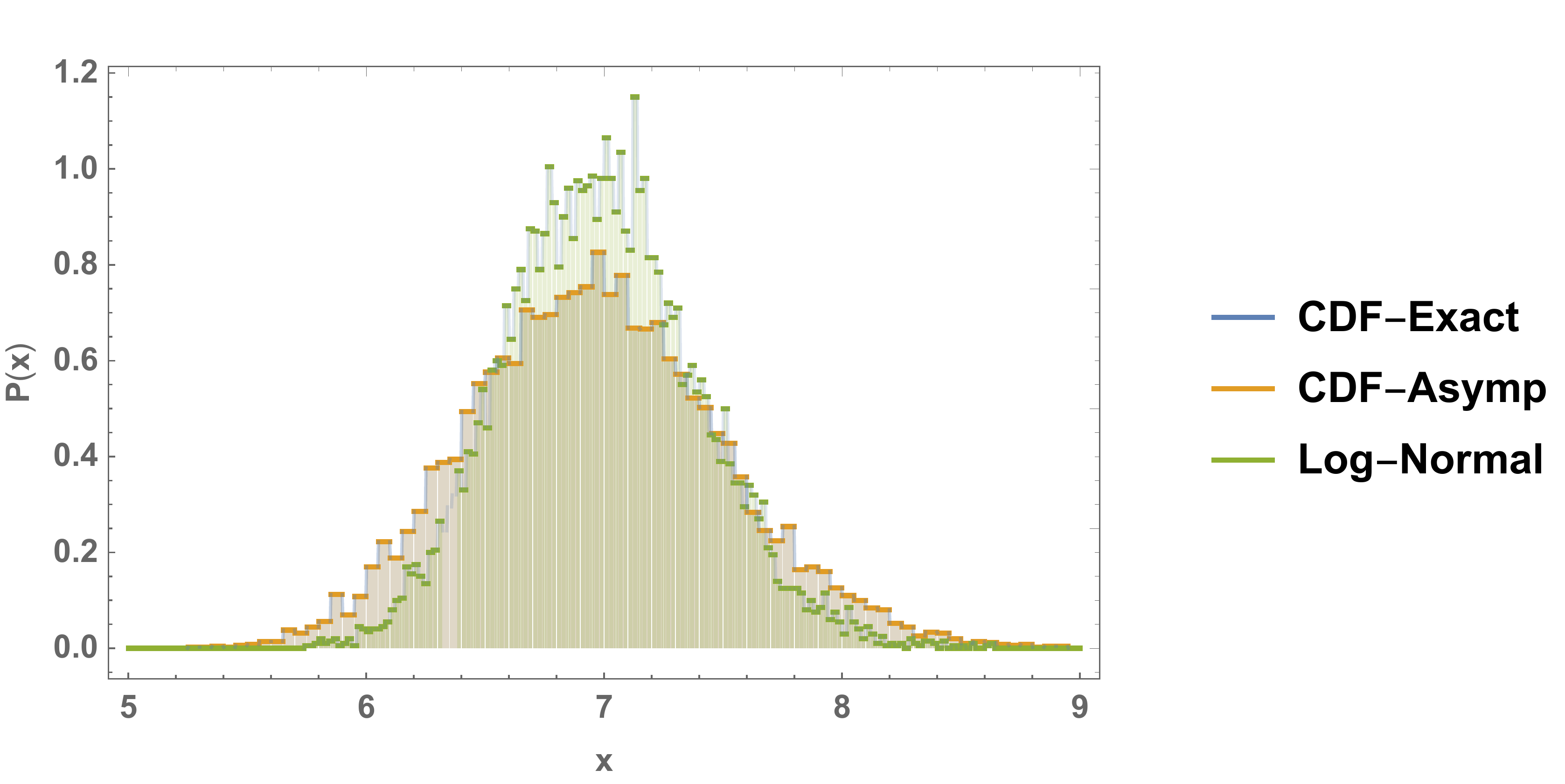}
\caption{Histogram distributions of estimators for $m$ given 100 bins}\label{fig:estimators}
\end{figure}
\FloatBarrier


\subsection{Radio-frequency interference field} Consider an equispaced grid of locations of interferers transmitting radio-frequency waves with constant radial spacing $d>0$. The transmitted powers are denoted $\{P_{x}: x=1,2,\dotsb\}$ and are distributed $\text{Exponential}(4\pi d^2 \lambda)$ where $\lambda>0$. The sphere's surface area is $4\pi r^2$ and the power at the origin from interferer $x$ is $I_x=P_x/r^2$. Therefore, the total power received at the origin $I$ from the field of interferers is Jacobi theta distributed with $m=1/(4\pi d^2\lambda)$, having mean \[\E I = \E\sum_x I_x = \frac{\zeta(2)}{4\pi\lambda d^2}=\frac{\pi}{24\lambda d^2}\] and variance \[\Var I = \frac{\zeta(4)}{16\pi^2\lambda^2 d^4}= \frac{\pi^2}{1\,440 \lambda^2d^4}\] In Figure~\ref{fig:rf} we show $z=10^3$ random locations on a disk of radius $z$ with unit radial spacing. The distribution of points on the plane concentrates towards the origin, with point density \[\frac{1}{\pi r}\sim O(r^{-1})\]

\begin{figure}[h!]
\centering
\includegraphics[width=5in]{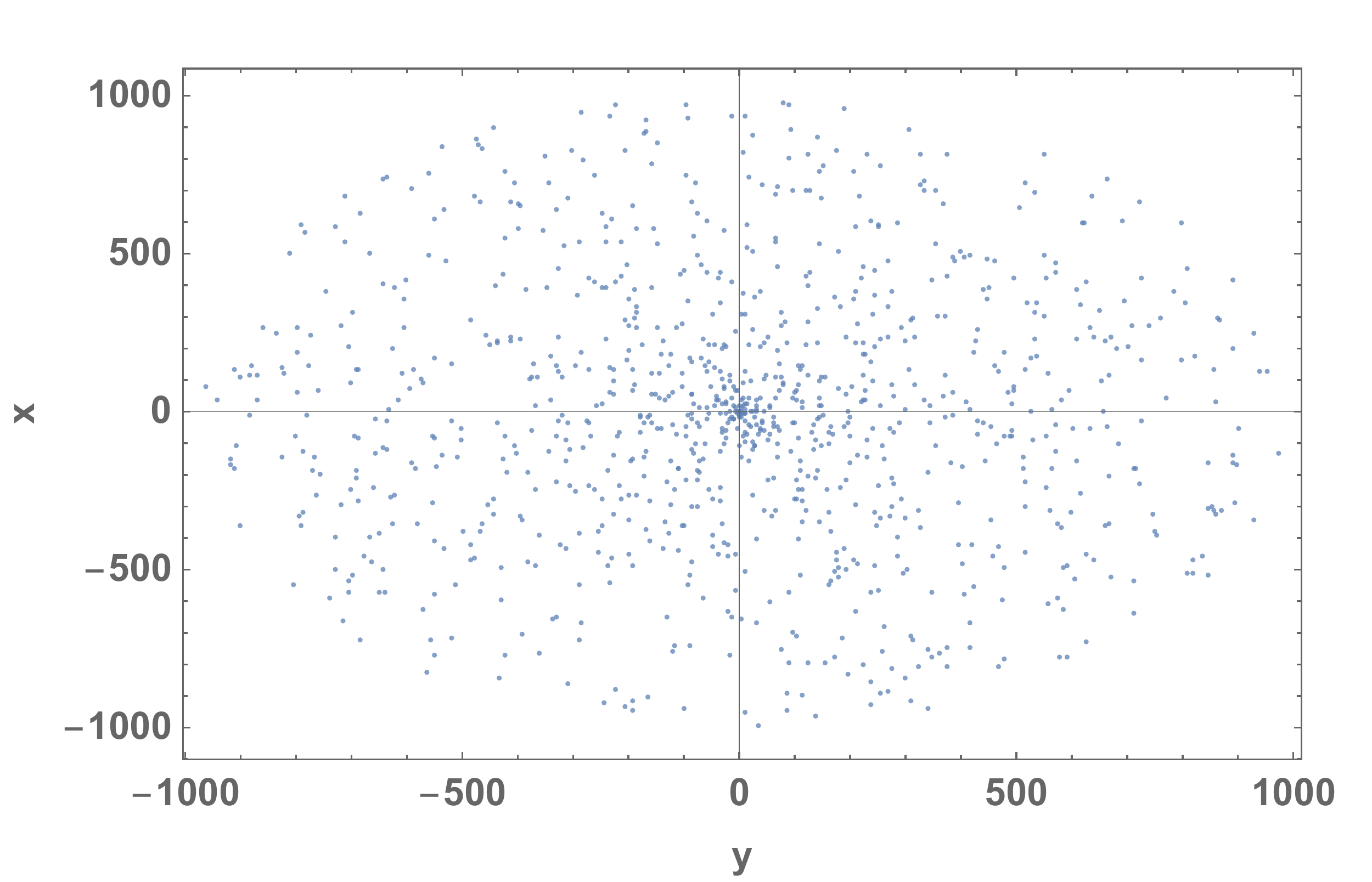}
\caption{$10^3$ random locations with constant radial spacing}\label{fig:rf}
\end{figure}
\FloatBarrier 

Next we slightly alter the problem by changing the spacing of the points and restricting to $[0,t]$.  Altering the points changes the distribution, yet the mean and variance may be readily attained. Consider the grid on $[0,t]$ formed by $D_t=\{\sqrt{tx}:x=1,\dotsb,t\}$. Then $I$ on $[0,t]$ is denoted $I_{t}$ \[I_t = \sum_{z=\sqrt{tx}\in D_t} I_z \] with mean and variance \begin{align*}\E I_t &=\sum_{z=\sqrt{tx}\in D_t}\frac{1}{4\pi\lambda d^2z^2}=\sum_{z=\sqrt{tx}\in D_t}\frac{1}{4\pi\lambda d^2tx}=\frac{H_t/t}{4\pi\lambda d^2}\\\Var I_t &=\sum_{z=\sqrt{tx}\in D_t}\frac{1}{(4\pi\lambda d^2z^2)^2}=\sum_{z=\sqrt{tx}\in D_t}\frac{1}{(4\pi\lambda td^2x)^2}=\frac{H_t^{(2)}/t^2}{(4\pi\lambda d^2)^2}\end{align*} where $H_t$ is the $t$-th Harmonic number. Note that $H_t/t<\zeta(2)$ for $t\ge1$ and thus the interference at the origin is reduced with the altered spacing. The distribution of points on the plane is constant, with point density $1/\pi$. We show $10^3$ random locations with linear radial spacing below in Figure~\ref{fig:rf2}.

\begin{figure}[h!]
\centering
\includegraphics[width=5in]{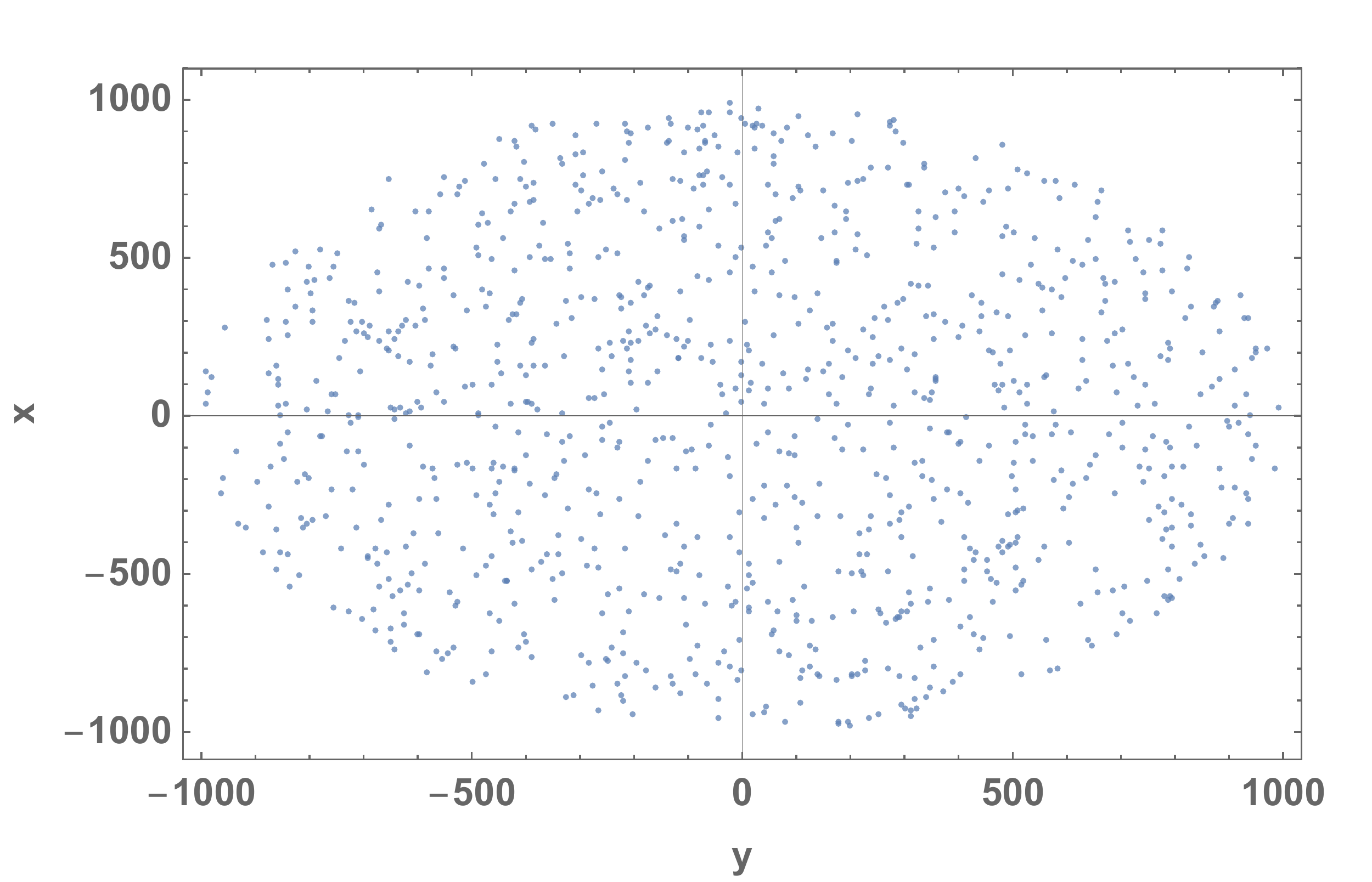}
\caption{$10^3$ random locations with linear radial spacing}\label{fig:rf2}
\end{figure}
\FloatBarrier 

Suppose we have a transmitter at some location $z$ with power $P_z\sim\text{Exponential}(4\pi d^2 \lambda z)$ superimposed with interfering field $I$ comprised of interferers  $\{P_x\}$ as before, with Jacobi theta distribution with parameter $m=1/(4\pi d^2\lambda)$. We are interested in the mean and variance of the random variable of $Q_z=P_z/I$---the signal interference noise ratio (SINR)---which may be calculated through the ratio distribution using the log-normal approximation as \begin{align*}\E Q_z &\simeq\frac{21}{10 \pi ^4 d^2 \lambda  m z}\sim O(z^{-1})\\\Var Q_z &\simeq\frac{3969}{500 \pi ^8 d^4 \lambda ^2 m^2 z^2}\sim O(z^{-2})\end{align*} and $\E Q_z/\sqrt{\Var Q_z}\simeq\sqrt{5}/3\simeq 0.75$. We plot $\P(Q_z>t)$---the coverage probability---for $z=1$, $m\in\{3,5,7,9\}$, $\lambda=1/100$, and $d=1$ below in Figure~\ref{fig:sinr}.

\begin{figure}[h!]
\centering
\includegraphics[width=5in]{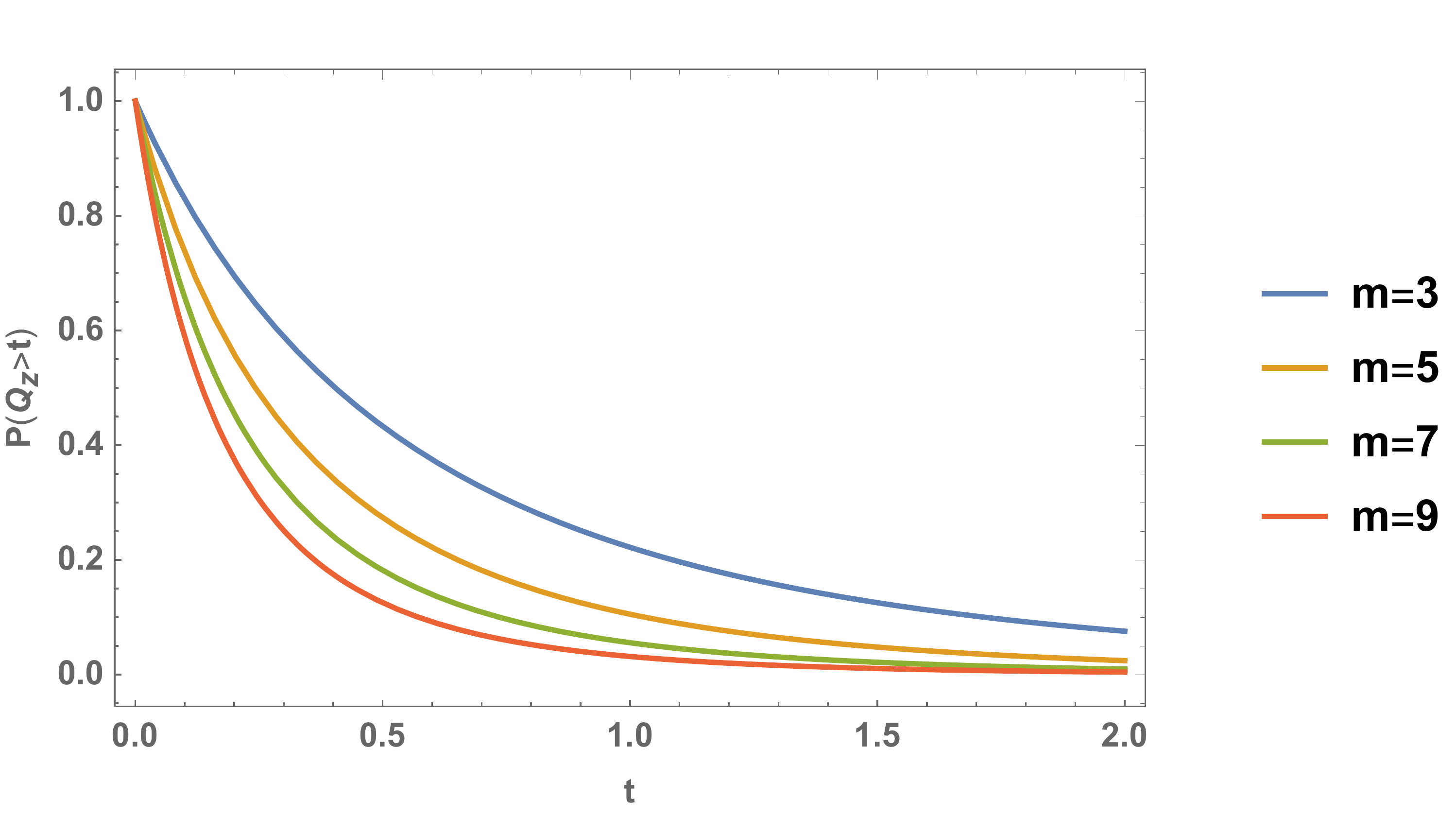}
\caption{Signal interference noise ratio for $m\in\{3,5,7,9\}$, $\lambda=1/100$, $d=1$ at location $z=1$}\label{fig:sinr}
\end{figure}
\FloatBarrier 

\subsection{Economic gravity model} Consider an equispaced grid of locations of countries relative to a country at the origin with spacing $d$. For each location $x$, there exist two independent random variables: a fraction coefficient $U_x$ uniformly taking values in $(0,1)$ and gross domestic product $G_x$, assumed to be $\text{Gamma}(2,\lambda)$ distributed. The gravity model of bilateral trade flows with the origin is described by $T_x=U_xG_x/d_x^2$, where $d_x>0$ is distance. Then the trade flow at $x$ is given by $T_x=U_xG_x\sim\text{Exponential}(\lambda)$ and the total trade flows with the origin is given by \[T = \sum_x \frac{U_xG_x}{(dx)^2}\] which has the Jacobi theta distribution with parameter $m=1/(\lambda d^2)$.

\subsection{Electric field} Consider an equispaced grid of locations of point charges with spacing $d>0$. The point charges are $\{Q_x: x=1,2,\dotsb\}$ and are distributed $\text{Exponential}(4\pi \varepsilon_0 d^2\lambda)$. Then the total charge of the electric field at the origin has the Jacobi theta distribution with parameter $m=1/(4\pi \varepsilon_0 d^2\lambda)$.

\section{Discussion and conclusions}\label{sec:conc}

We describe a continuous univariate distribution supported on the positive reals based on integration of random measures composed of exponential random variables across an inverse-square surface. The Jacobi theta distribution is single-parameter,  positively skewed, and leptokurtic. The cumulative distribution and density functions are expressed in terms of the Jacobi theta function, and asymptotic and log-normal approximations enabling tractable calculations and exact maximum likelihood inference.

\section{Acknowledgements} 

\bibliographystyle{apalike}

\end{document}